\newtheorem{thm}{Theorem}[section]
\newtheorem{cor}[thm]{Corollary}
\newtheorem{lem}[thm]{Lemma}
\newtheorem{prop}[thm]{Proposition}
\theoremstyle{definition}
\newtheorem{defn}[thm]{Definition}
\theoremstyle{remark}
\newtheorem{rem}[thm]{Remark}
\numberwithin{equation}{section}
\newcommand{\Aut}{\textnormal{Aut}}
\newcommand{\N}{\mathbf{N}}
\newcommand{\Z}{\mathbf{Z}}
\newcommand{\C}{\mathbf{C}}
\newcommand{\Compl}{\mathrm{Compl}}
\newcommand{\Bir}{\mathrm{Bir}}
\newcommand{\dra}{\dashrightarrow}
\newcommand{\bfk}{\mathbf{k}}
\begin{document}

\address{CNRS and Univ Lyon, Univ Claude Bernard Lyon 1, Institut Camille Jordan, 43 blvd. du 11 novembre 1918, F-69622 Villeurbanne}
\email{cornulier@math.univ-lyon1.fr}
\subjclass[2010]{Primary 14E07; Secondary 14J50, 20B07, 20M18}

\title{Regularization of birational actions of FW groups}
\author{Yves Cornulier}%
\date{March 11, 2021}%\today}

%	14E07  	Birational automorphisms, Cremona group and generalizations
%14J50  	Automorphisms of surfaces and higher-dimensional varieties
%20Bxx		Permutation groups
%	20B07  	General theory for infinite groups
%	20M18  	Inverse semigroups

% ----------------------------------------------------------------
\begin{abstract}
We prove that every birational action of a group with Property FW can be regularized.
\end{abstract}
\maketitle
% ----------------------------------------------------------------

\section{Result and context}

Fix a ground algebraically closed field $\bfk$ (see Remark \ref{ground_field} as regards arbitrary ground fields). By {\bf prevariety} we mean a scheme over $\bfk$, locally of finite type over $\bfk$. That is, a prevariety is a topological space $X$, endowed with a sheaf of $\bfk$-algebras, that is locally isomorphic to an affine variety (namely $\mathrm{Spec}(A)$ for $A$ finitely generated $\bfk$-algebra with its structural sheaf). A prevariety is {\bf separated} if the diagonal embedding $X\to X\times X$ is a closed immersion (the product $\times$ is over $\mathrm{Spec}(\bfk)$).
A {\bf variety} is a separated prevariety that is noetherian (equivalently, has a finite cover by affine open subsets). We do not assume it to be reduced.

For groups, {\bf Property FW} is a combinatorial restriction on its actions. It notably holds for groups with Kazhdan's Property~T such as $\mathrm{SL}_3(\Z)$ and its finite index subgroups; see \S\ref{sec_parta} for the definition. Here we prove the following, which solves \cite[Question 10.1]{CC}.

\begin{thm}\label{main_reg}
Let $X$ be an irreducible variety. 
Let $G$ be a group with Property FW and $G\to\Bir(X)$ a birational action. Then there exist an irreducible variety $Y$, a bi-regular action $G\to\Aut(Y)$, and a $G$-equivariant birational transformation $X\dra Y$.
\end{thm}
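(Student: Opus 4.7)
My plan is to use Property FW first to find a $G$-invariant ``divisorial structure'' on a birational model of $X$, and then to upgrade this divisorial regularity to an honest biregular action.

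First, I would replace $X$ by a convenient birational model, noting that every irreducible variety contains a dense quasi-projective open subset and that the birational action depends only on the function field $K=\bfk(X)$. Let $\mathcal{V}$ be the set of divisorial valuations of $K/\bfk$---those discrete rank-one valuations whose residue field has transcendence degree $\dim X - 1$ over $\bfk$. Then $G$ acts on $\mathcal{V}$ via its action on $K$. For any model $X'$ birational to $X$, the subset $\mathcal{V}_{X'}\subset\mathcal{V}$ of valuations given by prime Weil divisors on $X'$ is $G$-\emph{commensurated}, because on a fixed model each $g\in G$ contracts and creates only finitely many divisors, so that $g\mathcal{V}_{X'}\triangle\mathcal{V}_{X'}$ is finite. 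Property FW then yields a $G$-invariant subset $\mathcal{V}^*\subset\mathcal{V}$ with $\mathcal{V}^*\triangle\mathcal{V}_X$ finite.

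Next I would realize $\mathcal{V}^*$ geometrically: blowing up finitely many centers corresponding to valuations in $\mathcal{V}^*\setminus\mathcal{V}_X$ and excising the divisors in $\mathcal{V}_X\setminus\mathcal{V}^*$ should yield a birational model $Y_0$ of $X$ whose set of prime divisors is exactly $\mathcal{V}^*$. Since $\mathcal{V}^*$ is $G$-invariant, $G$ then acts on $Y_0$ by \emph{pseudo-automorphisms}: birational self-maps regular outside a closed subset of codimension $\geq 2$, neither contracting nor creating any divisor.

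The final step---and the main obstacle, in my view---is to promote this pseudo-biregular action on $Y_0$ to a genuinely biregular action on some variety $Y$. Crucially, $Y$ is not required to be proper, which gives room: one may try to pick a small open affine $U\subset Y_0$ on which the relevant $g\in G$ are regular (exploiting that each indeterminacy has codimension $\geq 2$) and attempt to assemble $Y:=\bigcup_{g\in G}gU$ as a prevariety, verifying separatedness from the fact that $\Bir(X)$ acts on $K$ by honest field automorphisms. Since in higher dimension pseudo-automorphism groups do genuinely exceed automorphism groups, ensuring that the gluing produces a separated noetherian variety, rather than a non-separated prevariety with ``doubled points,'' seems to be the hard technical core. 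I would expect to invoke Property FW a second time on an auxiliary commensurated set encoding codimension-$\geq 2$ indeterminacy data, or to leverage an orbit-finiteness property of $G$ on affine charts coming from FW.
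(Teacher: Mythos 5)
Your first two steps essentially reproduce the earlier, weaker result of Cantat--Cornulier \cite{CC}: the commensuration of the divisorial valuations carried by a fixed model, Property FW applied once, and the construction of a model $Y_0$ on which $G$ acts by pseudo-automorphisms. That part is reasonable (modulo details such as non-reduced or non-normal $X$, which the paper allows and which make the valuation-theoretic setup delicate). But the final step, which you yourself identify as the hard technical core, is precisely the new content of the theorem, and your proposal does not contain a proof of it. The sketched construction --- take a small affine $U\subset Y_0$ and form $Y=\bigcup_{g\in G}gU$ --- produces in general a prevariety which is neither separated nor noetherian; it is essentially the universal globalization $\Compl(G,U)$ of the partial action, and this is exactly the obstruction the paper is about. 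In particular, separatedness cannot be ``verified from the fact that $\Bir(X)$ acts on the function field by honest field automorphisms'': a non-separated gluing of two copies of a variety along a dense open subset has the same function field and the same induced field automorphisms, so no function-field argument can rule it out. Likewise, noetherianity of $\bigcup_{g\in G}gU$ fails as soon as infinitely many translates $gU$ are genuinely distinct. Saying that one ``would expect to invoke Property FW a second time on an auxiliary commensurated set encoding codimension-$\ge 2$ indeterminacy data'' names the right kind of idea but does not identify the commensurated set nor show how transfixing it yields a variety, so the argument stops exactly where the theorem begins.

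For comparison, the paper bypasses the valuation/pseudo-automorphism route entirely: it globalizes the partial action of $G$ on $X$ to a prevariety $\hat{X}$ on which $G$ acts biregularly, and then runs a downward induction on dimension, applying Property FW at \emph{each} dimension $i$ to the commensurated set of $i$-dimensional points of a finite union of translates $\bigcup_{g\in J}gZ$, together with Neumann's lemma, to produce a $G$-invariant open subset that is a union of finitely many translates of an open subset of $X$ (hence noetherian); a separate, purely topological lemma about noetherian spaces then extracts a dense $G$-invariant open subset that is separated. Your divisor-level application of FW corresponds only to the top step ($i=\dim X-1$) of this induction; the descent through all lower dimensions and the noetherian-to-separated lemma are what resolve the codimension-$\ge 2$ difficulty, and they are missing from your proposal.
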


\begin{rem}\label{firstrem}In \cite{CC} in the case of (reduced) surfaces, a stronger assumption is obtained, namely that $Y$ can be chosen to be projective and smooth. However, this fails in dimension $\ge 3$: (at least) in characteristic zero the monomial copy of $\mathrm{SL}_3(\Z)$ in $\mathrm{Bir}(\mathbb{P}^3)$ cannot be regularized in any projective model, according to classification results of Cantat and Zeghib \cite{CZ} in the smooth case, and therefore in general, using equivariant resolution of singularities. On the other hand, if $X$ is a normal variety, then $Y$ can be chosen to be quasi-projective, see Remark \ref{normal_qp}.
\end{rem}

 Using results about finitely generated subgroups of automorphism groups of varieties \cite{BL}, this yields:

\begin{cor}
Let $G$ be a finitely generated group with Property FW and $X$ an irreducible variety. The following two assertions hold.
\begin{enumerate}
\item If $G\to\Bir(X)$ is an injective homomorphism, then $G$ is residually finite; if moreover $\mathbf{k}$ has characteristic zero, then $G$ is virtually torsion-free. 
\item If $G$ has no nontrivial finite quotient, then $\mathrm{Hom}(G,\Bir(X))$ is trivial.\qed
 %is trivial for every $X$.
\end{enumerate}
\end{cor}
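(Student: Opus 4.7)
The plan is a two-step reduction: first, use Theorem \ref{main_reg} (together with Remark \ref{normal_qp}) to replace the birational action by a biregular action on a quasi-projective variety, and then invoke \cite{BL} to extract the residual finiteness and torsion-freeness conclusions.

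For (1), let $G\hookrightarrow\Bir(X)$ be injective. I would first reduce to the case where $X$ is normal: replace $X$ by its normalization $X^\nu$, which is still irreducible and carries a birational action of $G$, since $\Bir(X^\nu)=\Bir(X)$ (as $X^\nu\to X$ is birational, every birational self-map lifts uniquely). Now Theorem \ref{main_reg} and Remark \ref{normal_qp} produce an irreducible quasi-projective variety $Y$ equipped with a biregular action $G\to\Aut(Y)$ and a $G$-equivariant birational map $X^\nu\dra Y$. The resulting homomorphism $G\to\Aut(Y)$ is still injective: if $g\in G$ acts trivially on $Y$, it acts as the identity on the dense open locus of definition of $X^\nu\dra Y$, hence is the identity element of $\Bir(X^\nu)=\Bir(X)$, so $g=1$. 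Thus $G$ is realized as a finitely generated subgroup of $\Aut(Y)$ for a quasi-projective variety $Y$, and \cite{BL} yields that $G$ is residually finite, and virtually torsion-free when $\mathrm{char}(\bfk)=0$.

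For (2), given any homomorphism $\varphi:G\to\Bir(X)$, set $H=\varphi(G)$. Since Property FW passes to quotients, $H$ is a finitely generated group with Property FW, and as a quotient of $G$ it inherits the property of having no nontrivial finite quotient. Applying part (1) to the inclusion $H\hookrightarrow\Bir(X)$, we find that $H$ is residually finite; but a residually finite group whose every finite quotient is trivial is itself trivial, so $H=\{1\}$ and $\varphi$ is trivial.

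The main (modest) obstacle is in the first reduction step of (1): one must check that passing to the normalization does not destroy the hypothesis, and that injectivity of $G\to\Bir(X)$ is preserved when passing to the biregular realization $G\to\Aut(Y)$. Both are standard—functoriality of normalization under birational maps for the first, and irreducibility of $X^\nu$ together with the fact that a birational self-map which is the identity on a nonempty open subset is globally the identity for the second—so that, once Theorem \ref{main_reg}, Remark \ref{normal_qp} and the results of \cite{BL} are available, the corollary follows essentially by formal manipulations.
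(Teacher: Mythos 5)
Your overall strategy---regularize via Theorem \ref{main_reg} and then invoke Bass--Lubotzky \cite{BL}---is exactly the paper's intended argument, and your treatment of (2) (pass to the image, which is finitely generated, has Property FW, and has no nontrivial finite quotient, then apply (1) and residual finiteness) is the expected reduction. However, your first step in (1) has a genuine gap in the generality in which the paper works: varieties here are allowed to be non-reduced, and for non-reduced $X$ the claim $\Bir(X^\nu)=\Bir(X)$ is false. The normalization factors through the reduction $X_{\mathrm{red}}$, the normalization morphism is then not a birational transformation in the paper's sense (it is an isomorphism over no open subset of $X$), and the natural map $\Bir(X)\to\Bir(X_{\mathrm{red}})$ is in general not injective (the paper's remark gives $A=\bfk[x,y,z]/(z^2)$ as an example). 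So after replacing $X$ by $X^\nu$ you may lose injectivity of the homomorphism $G\to\Bir$, which is precisely what you need to conclude residual finiteness of $G$ itself. As written, your argument proves (1) only for reduced $X$.

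The detour is also unnecessary: the theorem of Bass and Lubotzky applies to finitely generated subgroups of $\Aut(Y)$ for $Y$ any scheme of finite type over a field (this is the content referred to by the title of \cite{BL}), not only for quasi-projective $Y$. So apply Theorem \ref{main_reg} directly to the given, possibly non-reduced, $X$: it yields an irreducible variety $Y$---in the paper's conventions a separated noetherian scheme covered by finitely many affine opens, hence of finite type over $\bfk$---with a biregular $G$-action and a $G$-equivariant birational map $X\dra Y$. Your injectivity check then goes through verbatim: an element acting trivially on $Y$ is trivial in $\Bir(Y)\cong\Bir(X)$, hence trivial in $G$. Applying \cite{BL} to the image in $\Aut(Y)$ gives residual finiteness and, in characteristic zero, virtual torsion-freeness. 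Remark \ref{normal_qp} and quasi-projectivity play no role in the corollary; they matter only if one wants the stronger conclusion that the regularized model is quasi-projective.
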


The assertions of the corollary (for $X$ reduced in characteristic zero) are known for groups with Kazhdan's Property T \cite{CX}, using analytic methods. Nevertheless, Theorem \ref{main_reg} is new also in the case when $G$ has Kazhdan's Propery T, say with $X$ smooth over $\C$.

In \cite{CC} a weaker regularization result was established. Namely it was proved (assuming $X$ reduced) that one can find such $Y$ with an action by ``pseudo-automorphisms", i.e., for which the birational maps are defined as isomorphisms between complements of closed subsets of codimension $\ge 2$. In a sense, this is the first step towards the above regularization theorem. Nevertheless, our approach, even in this first step, is different and makes use of Exel's notion of partial action. It allows to embed every variety $X$ as a dense open subset of a prevariety $\hat{X}$, so that $\Bir(X)$ naturally acts on $\hat{X}$ in a bi-regular way restricting to the given birational action on $X$ (see Proposition \ref{actcomplet}). At this point we reach a regularization result in a somewhat insane generality; the cost being that $\hat{X}$ is usually neither separated nor noetherian (it is obtained by gluing copies of $X$, indexed by $\Bir(X)$, over open subsets). The work then consists in using Property FW so as to find in $\hat{X}$ a dense open invariant subvariety.

There are two parts in the proof. The first part consists in finding an open noetherian $G$-invariant subset. The second part relies on a very general lemma: inside this noetherian open subset, one produces a $G$-invariant separated subset. The first part is where Property FW is used, and all the work is done inside a prevariety that is not generally separated nor noetherian (although it has a dense open affine subvariety). 

\begin{rem}
When considering birational groups, it is usual to assume irreducible varieties to be reduced, but this sounds often unnecessary. For $X=\mathrm{Spec}(A)$ for some finitely generated $\bfk$-algebra $A$ and arbitrary field $\bfk$, assuming that $X$ is irreducible means that the nilradical $R$ of $A$ is a prime ideal (the unique minimal one), and $\Bir(X)$ is naturally isomorphic to $\Aut_{\bfk\mbox{-alg}}(S^{-1}A)$, for $S=A\smallsetminus R$; here $S^{-1}A$ is the total ring of fractions. There is a canonical homomorphism $\Bir(X)\to\Bir(X_{\mathrm{red}})$ which is usually not injective, for instance for $A=\bfk[x,y,z]/(z^2)$ (in which case it is surjective). 
\end{rem}

\section{Discussion, main concepts, and restatement of the main result}

Main convention: for a variety $X$, we insist that we use the schematic point of view, for which all irreducible subvarieties are viewed as elements of $X$.

It is useful to give a precise definition of a birational transformation, as we do not want to define it as an equivalence class. Let $X,Y$ be irreducible (possibly not reduced) varieties. Namely, we define a {\bf birational transformation} $u:X\dra Y$ as a closed irreducible subvariety $f$ of $X\times Y$, such that there exist open dense subvarieties $U\subset X$, $U'\subset Y$, such that $f\cap (U\times U')$ is the graph of an isomorphism $U\to U'$. Then it is a standard verification that there exists a unique maximal such pair $(U_f,U'_f)$. Indeed, there exists a unique maximal open subset $V_f$ such that $f\cap (V_f\times Y)$ is the graph of a regular map $V_f\to Y$, and $U_f=\{x\in V_f:f(x)\in V_{f^{-1}}\}$, where for $x\in V_f$, the unique $y\in Y$ such that $(x,y)\in f$ is denoted by $f(x)$.

A basic fact is the following: if $X,Y,Z$ are irreducible varieties, with birational transformations $X\stackrel{f}\dra Y\stackrel{g}\dra Z$, the composition $g\circ f$ can be defined as the intersection over all dense open subsets $f',g'$ of $f$ and $g$ of the closure of $g'f'=\{(x,z)\in X\times Z:\exists y\in Y:(x,y)\in f,(y,z)\in g\}$. Then $U_{g\circ f}\supset U_f\cap f^{-1}(U_g\cap U_{f^{-1}})$. This implies, for fixed $X$, that mapping $f\in\Bir(X)$ to the partial bijection $f:U_f\to U_{f^{-1}}$ is a partial action in the sense of Exel~\cite{E}:

\begin{defn}
A {\bf partial action} of a group $G$ on a set $X$ is a map $\alpha$ from $G$ to the set of partial bijections of $X$: $\alpha(g):D_g\to D'_g$, satisfying: $\alpha(1)=\mathrm{id}_X$, $\alpha(g^{-1})=\alpha(g)^{-1}$, and $\alpha(gh)\supset\alpha(g)\alpha(h)$ for all $g,h\in G$. We call $X$ a {\bf partial $G$-set}.
\end{defn}

Here $D_g$ and $D'_g$ are called {\bf domain} and {\bf codomain} of $g$, and $D'_g=D_{g^{-1}}$. The last condition means, writing $gx=\alpha(g)x$, that whenever $hx$ and $g(hx)$ are defined, then $(gh)x$ is defined and equals $g(hx)$. 

Given a partial action of $G$ on $X$ and a subset $Y$, one obtains by restriction a partial action of $G$ on $Y$ (called {\bf restricted} partial action on $Y$), defined by $g\mapsto \alpha(g)\cap (Y\times Y)$. In particular, we can construct a partial action starting with an action and restrict to a subset (the resulting partial action being an action only when the subset is invariant under the action). This is actually the only way to produce partial actions:

\begin{prop}[Abadie, Kellendonk-Lawson]\label{abadie}
For every partial action $\alpha$ of a group $G$ on a set $X$, there exists a $G$-set $\hat{X}$ (also denoted $\Compl(G,X)$ or $\Compl(G,\alpha)$ if we need to emphasize $G$ or $\alpha$) and an injective map $X\to\hat{X}$, such that $X$ meets every $G$-orbit, and $\alpha$ is the restricted partial action. Such a $G$-set is unique up to a $G$-equivariant bijection inducing the identity on $X$.
\end{prop}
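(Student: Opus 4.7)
The plan is to construct $\hat{X}$ explicitly as a quotient of $G \times X$ by an equivalence relation, then verify each required property, and finally deduce uniqueness by a functorial argument. On $G \times X$ I would declare $(g, x) \sim (h, y)$ whenever $y \in D_{g^{-1}h}$ and $\alpha(g^{-1}h)(y) = x$; the intuition is that $[g, x]$ stands for the formal translate ``$g \cdot x$''. Reflexivity and symmetry are immediate from $\alpha(1) = \mathrm{id}_X$ and $\alpha(k)^{-1} = \alpha(k^{-1})$. Transitivity is where the partial-action axiom really enters: if $(g_1, x_1) \sim (g_2, x_2) \sim (g_3, x_3)$, then $\alpha(g_1^{-1}g_2)(x_2) = x_1$ and $\alpha(g_2^{-1}g_3)(x_3) = x_2$, so the inclusion $\alpha(g_1^{-1}g_3) \supset \alpha(g_1^{-1}g_2)\alpha(g_2^{-1}g_3)$ gives $x_3 \in D_{g_1^{-1}g_3}$ with $\alpha(g_1^{-1}g_3)(x_3) = x_1$, i.e., $(g_1, x_1) \sim (g_3, x_3)$. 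Set $\hat{X} = (G \times X)/\sim$.

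Next I would define the $G$-action by $k \cdot [g, x] = [kg, x]$; it is well-defined because the condition $(g,x)\sim(h,y)$ depends only on the pair $(g^{-1}h, x, y)$, which is unchanged upon left-multiplying $g, h$ by the same $k$. The map $\iota: X \to \hat{X}$, $x \mapsto [1, x]$, is injective ($[1, x] = [1, y]$ forces $y \in D_1 = X$ and $\alpha(1)(y) = x$, i.e., $y = x$), and every orbit meets $\iota(X)$ because $[g, x] = g \cdot [1, x]$. To check that the restricted partial action on $\iota(X)$ is $\alpha$: the element $g \cdot [1, x] = [g, x]$ lies in $\iota(X)$, say equals $[1, y]$, precisely when $y \in D_{g^{-1}}$ and $\alpha(g^{-1})(y) = x$, which, using $D_{g^{-1}} = D'_g$, is equivalent to $x \in D_g$ and $y = \alpha(g)(x)$.

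For uniqueness, given another candidate $(\tilde{X}, \tilde{\iota})$, define $\Phi: \hat{X} \to \tilde{X}$ by $[g, x] \mapsto g \cdot \tilde{\iota}(x)$. Well-definedness follows from the fact that the restricted partial action on $\tilde{\iota}(X)$ is also $\alpha$: if $(g, x) \sim (h, y)$, i.e., $y \in D_{g^{-1}h}$ and $\alpha(g^{-1}h)(y) = x$, then $g^{-1}h \cdot \tilde{\iota}(y) = \tilde{\iota}(x)$, so $h \cdot \tilde{\iota}(y) = g \cdot \tilde{\iota}(x)$. The map $\Phi$ is $G$-equivariant by construction, surjective because every orbit of $\tilde{X}$ meets $\tilde{\iota}(X)$, and injective by reading the computation in reverse (using that $\alpha$ is the restricted partial action on $\tilde{\iota}(X)$).

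The main obstacle is less a deep idea than a bookkeeping issue: one must keep careful track of the conventions $D_g = D'_{g^{-1}}$ and $\alpha(g^{-1}) = \alpha(g)^{-1}$, and isolate the exact place where the one-sided inclusion $\alpha(gh) \supset \alpha(g)\alpha(h)$ (not equality) is used, namely transitivity of $\sim$. Everything else is formal.
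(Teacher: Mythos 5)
Your construction is essentially the paper's: the same set $G\times X$ with action $k\cdot(g,x)=(kg,x)$, the same embedding $x\mapsto[1,x]$, and an equivalence relation that coincides with the paper's (the paper identifies $(g,x)$ with $(h,y)$ when some $k$ makes $(kg)x$ and $(kh)y$ defined and equal, which by taking $k=g^{-1}$ and using $\alpha(g^{-1}h)\supset\alpha(g^{-1}k^{-1})\alpha(kh)$ is equivalent to your condition $\alpha(g^{-1}h)(y)=x$). Your verifications of transitivity, injectivity, the restricted partial action, and uniqueness correctly fill in what the paper dismisses as direct.
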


This $G$-set $\hat{X}$ is called {\bf universal globalization} of the partial $G$-set $X$. It is defined by considering $G\times X$ with $G$-action $g\cdot (h,x)=(gh,x)$, and identifying $(g,x)$ and $(h,y)$ whenever there exists $k\in G$ such that $(kg)x$ and $(kh)y$ are defined and equal. The set $X$ is embedded by mapping $x$ to the class of $(1,x)$. Verifications are direct. From uniqueness, we deduce that if $E$ is a $G$-set and $X$ a subset (thus viewed as partial $G$-set), then its universal globalization can be identified with the inclusion of $X$ into $GX$, the smallest $G$-invariant subset of $E$ containing~$X$.

 At first glance, this seems to mean that the notion of partial action is pointless, since it just consists in restricting actions to arbitrary subsets. But the point is that we deal in practice with explicit partial actions (for instance, of a group with a birational action on a variety) whose universal globalization is a quite huge mysterious and complicated object.

The universal globalization inherits local structures from $X$ that are preserved by the $G$-action.
The first is the topology: if a partial action is topological, i.e., if for every $g\in G$, the domain $D_g$ is open and $\alpha(g):D_g\to D'_g$ is a homeomorphism, then there is a unique topology on $\hat{X}$ for which $X$ is open, the induced topology on $X$ is the original one, and the $G$-action on $X$ is continuous (i.e., by self-homeomorphisms -- no topology is considered on $G$). Moreover, if $D_g$ is dense for every $g$, then $X$ is dense in $\hat{X}$. The verifications are straightforward and done in both \cite{AT,A} and \cite{KL}.

Second, the structure of prevariety is inherited by $\hat{X}$:

\begin{prop}\label{actcomplet}
Let $X$ be an irreducible variety. Let $G\to\Bir(X)$ be a homomorphism. Then then there exists a unique structure of prevariety on $\hat{X}=\Compl(G,X)$ for which $X$ is a dense open subset and $G$ acts by (bi-regular) automorphisms.
\end{prop}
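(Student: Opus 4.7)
My plan is to construct the prevariety structure on $\hat X=\Compl(G,X)$ by gluing together $G$-translates of $X$. The topology on $\hat X$ is already at hand from the preceding discussion of topological partial actions, and by construction $\hat X=\bigcup_{g\in G}(g\cdot X)$; each $g\cdot X$ is open in $\hat X$ and, through the $G$-action on $\hat X$, is in bijection with $X$ via $\iota_g\colon X\to g\cdot X$, $x\mapsto g\cdot x=[g,x]$.

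\emph{Local structure.} Transport the prevariety structure of $X$ along $\iota_g$, equipping each open subset $g\cdot X\subset\hat X$ with the structure of a prevariety isomorphic to $X$. For $g,h\in G$, the overlap $(g\cdot X)\cap(h\cdot X)$ corresponds under $\iota_g^{-1}$ to the set of $x\in X$ such that $[g,x]$ lies in $h\cdot X$; by the construction of the universal globalization, this is precisely the domain $U_{h^{-1}g}$ of the birational transformation $h^{-1}g\in\Bir(X)$, an open dense subvariety of $X$. The transition function $\iota_h^{-1}\circ\iota_g$ restricted to this overlap is exactly the biregular isomorphism $h^{-1}g\colon U_{h^{-1}g}\to U_{g^{-1}h}$, hence an isomorphism of open subvarieties of $X$.

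\emph{Gluing.} The cocycle condition on a triple overlap $(g\cdot X)\cap(h\cdot X)\cap(k\cdot X)$ amounts to the identity $(k^{-1}h)\circ(h^{-1}g)=k^{-1}g$ in $\Bir(X)$ restricted to the locus where both sides are defined; this follows from the partial action axiom $\alpha(gh)\supset\alpha(g)\alpha(h)$ applied to $k^{-1}g=(k^{-1}h)(h^{-1}g)$. Standard gluing of locally ringed spaces then yields a sheaf of $\bfk$-algebras on $\hat X$ which makes it a prevariety with each $g\cdot X$ an open subprevariety isomorphic to $X$. Taking $g=1$ recovers $X$ itself as an open dense subprevariety with its original structure.

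\emph{Equivariance and uniqueness.} For $g,h\in G$, the action of $g$ sends $h\cdot X$ to $(gh)\cdot X$, and in the charts $\iota_h$ and $\iota_{gh}$ it reads as the identity $X\to X$; hence each element of $G$ acts by a biregular automorphism of $\hat X$. Conversely, any prevariety structure with the required properties must, on each $h\cdot X$, coincide with the pushforward of that of $X$ under $\iota_h$ (since $X$ carries the given structure and $G$ acts biregularly), and these charts cover $\hat X$; uniqueness follows. The only technical point is the cocycle verification on triple overlaps, which is a direct transcription of associativity in $G$ through the partial action axiom.
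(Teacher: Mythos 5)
Your proposal is correct and follows essentially the same route as the paper: charts $\iota_g\colon X\to gX$ indexed by $g\in G$, with transition maps given by the biregular isomorphisms $U_{h^{-1}g}\to U_{g^{-1}h}$ coming from the partial (birational) action, plus the already-established topology and density of the domains. You simply spell out the gluing cocycle and the uniqueness argument in more detail than the paper does.
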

(Unique means the topology is unique, and the prevariety structure is unique up to isomorphism of ringed space being the identity on the basis.)
\begin{proof}
Since the partial action of $G$ is topological, there is a unique topology on $\hat{X}$ making the $G$-action continuous. We have a system of charts, indexed by $g\in G$, given by the action of $g^{-1}$ on $gX$, valued in $X$. This carries the variety structure to $gX$, and the change of charts are by definition bi-regular. Whence the result.

Since domains of definitions are dense, $X$ is dense, as observed above.
\end{proof}

Of course this does not need to be a variety, i.e., does not have to be separated or noetherian (otherwise all birational actions could be regularized). However, this yields for free a regularization in the larger class of prevarieties, and Theorem \ref{main_reg} follows from the following:

\begin{thm}\label{sep_core}
Let $G$ be a group with Property FW and $G\to\Bir(X)$ a birational action. Then there exists a dense $G$-invariant open subset of $\hat{X}=\Compl(G,X)$ that is a variety.
\end{thm}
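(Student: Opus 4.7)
The plan is to follow the two-step strategy outlined after the statement: first extract a dense $G$-invariant open noetherian subprevariety $U_1 \subseteq \hat X$ from Property FW, then shrink within $U_1$ to a dense $G$-invariant open separated subvariety $U_2$, which will automatically be a variety. The first step is where the Property FW hypothesis enters; the second step is purely scheme-theoretic.

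For Step 1, the natural combinatorial gadget is the $G$-set $\mathcal D$ of prime divisors of $\hat X$ (integral closed codimension-one subschemes), together with the subset $\mathcal D_X \subseteq \mathcal D$ consisting of divisors whose generic point lies in the open chart $X \subseteq \hat X$. Since $X \cap gX$ is dense open in both $X$ and $gX$, and both are noetherian varieties, the closure in $X$ of $X \setminus gX$ has only finitely many codimension-one components (and symmetrically for $gX$); hence the symmetric difference $\mathcal D_X \triangle g\mathcal D_X$ is finite, i.e.\ $\mathcal D_X$ is a commensurated subset of $\mathcal D$. Property FW then produces a $G$-invariant $\mathcal D^\circ \subseteq \mathcal D$ with $\mathcal D^\circ \triangle \mathcal D_X$ finite. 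Writing $\{E_1,\dots,E_m\} := \mathcal D_X \setminus \mathcal D^\circ$, I would set
\[
U_1^X := X \setminus (E_1 \cup \cdots \cup E_m), \qquad U_1 := \bigcup_{g \in G} g\,U_1^X,
\]
which is a dense $G$-invariant open subprevariety of $\hat X$ whose prime divisors are exactly those in $\mathcal D^\circ$. The essential claim is that $U_1$ is quasi-compact, hence noetherian since prevarieties are locally noetherian; I expect this to follow from the finiteness of $\mathcal D^\circ \triangle \mathcal D_X$ by the following observation: the finitely many ``extra'' boundary divisors in $\mathcal D^\circ \setminus \mathcal D_X$ meet only finitely many charts $g_1 X,\dots,g_r X$ in codimension one, so $X \cup g_1 X \cup \cdots \cup g_r X$ with the $E_i$'s removed already exhausts $U_1$ up to codimension two, and the codimension-$\geq 2$ remainder can be absorbed by adjoining finitely many further affine charts.

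For Step 2, take $U_1$ as just produced and let $N \subseteq U_1$ denote the non-separated locus, i.e., the image of $\overline\Delta \setminus \Delta$ under either projection $U_1 \times U_1 \to U_1$, where $\Delta$ is the diagonal. Standard arguments for noetherian irreducible prevarieties show that $N$ is a proper closed subset (proper because $X \subseteq U_1$ is a dense separated open), and $N$ is $G$-invariant because it is canonically defined in terms of the prevariety structure. Thus $U_2 := U_1 \setminus N$ is a dense $G$-invariant open subprevariety of $U_1$ that is noetherian, separated, and hence a variety. The main obstacle is Step 1: Property FW delivers an invariant subset $\mathcal D^\circ$ of divisors very close to $\mathcal D_X$, but quasi-compactness of $U_1$ is a finiteness statement about an affine cover, not about divisors, so translating between the two will likely require a careful analysis (perhaps by induction on dimension) showing how the finitely many boundary divisors in $\mathcal D^\circ \setminus \mathcal D_X$ control the non-compactness of $\hat X$ away from $X$.
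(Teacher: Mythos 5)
Your Step 2 is fine in spirit (and is a slightly different route from the paper, which instead proves a general lemma producing a dense invariant open set in which any two points can be moved simultaneously into $X$ by some $g$, and then invokes the fact that a prevariety in which any two points lie in a common separated open subset is separated; your direct removal of the non-separated locus also works in the noetherian case, provided you replace the image of $\overline{\Delta}\smallsetminus\Delta$ by its closure and justify non-density, e.g.\ by a dimension/constructibility argument rather than merely ``$X$ is a dense separated open''). The real problem is Step 1, and there the gap is essential, not technical. Applying Property FW once, to the commensurated set $\mathcal D_X$ of codimension-one points, only controls the divisorial boundary. This is exactly the content of the earlier pseudo-automorphism regularization of Cantat--Cornulier, which the paper explicitly describes as ``the first step'': after killing finitely many divisors, the group acts by isomorphisms between complements of closed subsets of codimension $\ge 2$, and nothing prevents the charts $gU_1^X$, as $g$ ranges over the infinite group $G$, from endlessly introducing new points in codimension $\ge 2$ (new curves, surfaces, closed points). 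Your set $U_1=\bigcup_{g\in G}gU_1^X$ is a union over \emph{all} of $G$, and the assertion that its codimension-$\ge 2$ part ``can be absorbed by adjoining finitely many further affine charts'' is precisely the statement that needs proof; the finiteness of $\mathcal D^\circ\triangle\mathcal D_X$ gives no control whatsoever on points of codimension $\ge 2$, so quasi-compactness of $U_1$ does not follow, and in fact $U_1$ as you define it has no reason to be noetherian.

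The paper's proof shows what is actually needed: a reverse induction on the dimension $i$ of points, from $i=d$ down to $i=0$. At each level one works with a \emph{finite} union $Y=\bigcup_{g\in J}gZ$ of copies of a dense open $Z\subset X$ (so noetherianity is automatic, $|J|\le 2^{d-i}$), shows that the set $Y_i$ of $i$-dimensional points is commensurated using the already-achieved invariance of $Y_{\ge i+1}$, applies Property FW at that level to transfix $Y_i$, shrinks $Z$ by removing the closure of a finite exceptional set, and then uses a corollary of B.~H.~Neumann's lemma to conclude that a single extra translate suffices: $Y'_i\cup hY'_i$ is already $G$-invariant, which is what keeps the chart set finite (it merely doubles, $J\mapsto J\cup hJ$). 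Your proposal uses FW only at the top codimension-one level and never confronts the lower-dimensional levels, where FW (together with Neumann's lemma to retain finiteness of the cover) must be invoked again and again. So as written the argument proves at best the weaker pseudo-automorphism statement, not Theorem~\ref{sep_core}; to repair it you would essentially have to reproduce the paper's dimension-by-dimension induction.
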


\begin{rem}Clearly every point in $\hat{X}$ has an open neighborhood isomorphic to an open subset of $X$. Hence every local property (e.g., reduced, normal, smooth) is inherited by it, and also by the given open subvariety.
\end{rem}

\begin{rem}\label{normal_qp}If $X$ is normal, then this $G$-invariant open subset $U$ can be chosen to be quasi-projective (and hence $Y$ can be chosen to be quasi-projective in Theorem \ref{main_reg}). Indeed then $\hat{X}$ is normal (since this is a local condition), and so is $U$. O.~Benoist \cite{B} proved that every normal variety has finitely many maximal open quasi-projective subvarieties (obviously, it has at least one). Applied inside $U$, we can then restrict to the intersection of those, which is a dense open $G$-invariant subset.

In general, when the variety $U$ is not normal, I do not know whether it always holds that $\Aut(U)$ preserves a dense open quasi-projective subset.
\end{rem}

\begin{rem}\label{ground_field}
That $\bfk$ is algebraically closed actually plays no role except for the exposition, and Theorem \ref{sep_core} works with an arbitrary field $\bfk$. Hence Theorem \ref{main_reg} holds in the category of $\bfk$-varieties (separated schemes of finite type over $\mathrm{Spec}(\bfk)$).
\end{rem}

\begin{rem}
For reduced varieties and assuming the field perfect, Theorem \ref{main_reg} has independently been obtained by Lonjou and Urech \cite{LU} using explicit constructions of CAT(0) cube complexes.
\end{rem}

\section{The second step: a general lemma}\label{gen_prel}

Recall that a topological space is {\bf noetherian} if every nonempty set of closed subsets has a minimal element for inclusion, or equivalently if there is no strictly decreasing sequence of closed subsets.

\begin{lem}\label{noeth_core}
Let $Y$ be a noetherian topological space. Let $G$ be a group acting continuously on $Y$. Let $X$ be a dense open subset of $Y$. Then there exists a dense $G$-invariant open subset $U$ of $Y$ such that for any $x,y\in U$ there exists $g\in G$ such that $(gx,gy)\in X^2$.
\end{lem}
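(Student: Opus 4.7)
The plan is to construct $U$ as the complement of the closure of the \emph{obstruction set}
\[
B:=\{x\in Y:\exists\,y\in Y\text{ with no }g\in G\text{ satisfying }gx,gy\in X\}.
\]
A preliminary reduction: after replacing $Y$ by the $G$-invariant open subset $\bigcup_{g\in G}g^{-1}X$---which is dense in $Y$ (it contains $X$) and again noetherian---I may assume that every $x\in Y$ is moved into $X$ by some element of $G$. Then $\{g^{-1}X:g\in G\}$ is an open cover of $Y$, and since a noetherian space is quasi-compact there exists a finite subset $S\subseteq G$ with $Y=\bigcup_{s\in S}s^{-1}X$.

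The key construction is the open set $V:=\bigcap_{s\in S}s^{-1}X$. As a finite intersection of dense open subsets (dense in any topological space), $V$ is dense open in $Y$. Its crucial property is that every point of $V$ is ``compatible'' with every point of $Y$: given $x\in V$ and $y\in Y$, choose $s\in S$ with $sy\in X$; since $sx\in X$ as well, $g=s$ works. In particular $V\cap B=\emptyset$, and since $V$ is open this upgrades to $V\cap\overline B=\emptyset$.

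I then set $U:=Y\setminus\overline B$. The set $B$ is $G$-invariant (if $y$ witnesses the incompatibility of $x$, then $hy$ witnesses that of $hx$ for any $h\in G$), hence $\overline B$ is $G$-invariant because $G$ acts by homeomorphisms; thus $U$ is a $G$-invariant open subset of $Y$, dense since $V\subseteq U$. For any $x,y\in U$, the relation $x\notin B$ means by definition that every $y'\in Y$---in particular $y$---admits some $g\in G$ with $gx,gy'\in X$, which is what the lemma demands. No significant obstacle is visible: the two ingredients are quasi-compactness of noetherian spaces (to obtain the finite cover $S$) and the elementary fact that closures of invariant sets under continuous group actions remain invariant.
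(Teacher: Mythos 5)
Your argument is correct, and it takes a genuinely different route from the paper's. Both proofs come down to making the set of ``good'' points open, dense and $G$-invariant, but they exploit noetherianness differently. The paper never extracts a finite subcover: it works with the equivariant family $U_x=\{y:\exists g,\ \{gx,gy\}\subset X\}$ and its complements $F_x$, and uses the descending chain condition to find a dense open $V$ realizing the minimal closed set $K=\overline{\bigcup_{x\in V}F_x}$; the invariant set $U$ is then obtained from the complement $W$ of $K$ together with the interior of $\bigcap_{y\in W}U_y$. You use noetherianness only through quasi-compactness: after restricting to the invariant dense open $\bigcup_{g}g^{-1}X$ (a legitimate reduction, since a dense open invariant subset of it is dense open invariant in $Y$), you extract a finite subcover $\{s^{-1}X\}_{s\in S}$, and the dense open $V=\bigcap_{s\in S}s^{-1}X$ has the stronger property that each of its points is compatible with \emph{every} point of the reduced space; deleting $\overline{B}$ then finishes the argument. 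All your steps check out: $G$-invariance of $B$ and hence of $\overline{B}$, the upgrade from $V\cap B=\emptyset$ to $V\cap\overline{B}=\emptyset$ by openness of $V$, and density of $U$ via $V\subset U$. Your version is shorter and in fact proves slightly more (every $x\in U$ is compatible with every $y$ whose $G$-orbit meets $X$). One thing the paper's directed-family mechanism buys is flexibility: for the analogous statement about $n$-tuples of points a single $s\in S$ moving several arbitrary points into $X$ need not exist, so the subcover trick alone would not suffice and some induction or a variant of the paper's argument is needed; for the two-point lemma as stated, your proof is a clean and more economical alternative.
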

\begin{proof}
For $x\in Y$, define
\[U_x=\big\{y\in Y:\exists g\in G:\{gx,gy\}\subset X\big\}.\]
Note that $y\in U_x$ if and only if $x\in U_y$. Also, $U_x$ is the union over all $g$ such that $gx\in X$, of $g^{-1}X$; hence $U_x$ is open. Moreover, $x\mapsto U_x$ is $G$-equivariant:
\[U_{hx}=\big\{y:\exists g:\{ghx,gy\}\subset X\big\}=\big\{y:\exists g:\{gx,gh^{-1}y\big\}\subset X\}=hU_x.\]
Let $F_x=Y\smallsetminus U_x$ be its complement (so $x\mapsto F_x$ is also $G$-equivariant), and let $F=Y\smallsetminus X$ be the complement of $X$.

For $x\in X$, clearly $X\subset U_x$, that is, $F_x\subset F$. Let $\mathcal{U}$ be the set of dense open subsets of $Y$; define
\[K=\bigcap_{V\in\mathcal{U}}\overline{\bigcup_{x\in V}F_x}.\]
As an intersection of closed subsets, $K$ is closed. Also $K\subset \overline{\bigcup_{x\in X}F_x}\subset F$. Moreover, $K$ is $G$-invariant, since $\mathcal{U}$ is $G$-invariant and using equivariance of $x\mapsto F_x$.

Since $Y$ is noetherian, there exists $V\in\mathcal{U}$ such that $\overline{\bigcup_{x\in V}F_x}=K$. Let $W$ be the open complement of $K$; it is $G$-invariant. Then for every $y\in W$ and every $x\in V$, we have $y\in U_x$. So, for every $y\in W$ and $x\in V$ we have $x\in U_y$. In other words, we have $V\subset \bigcap_{y\in W}U_y$. The latter intersection is $G$-invariant; let $U'$ be its interior, thus $V\subset U'$, so that $U'$ is dense and also $G$-invariant.

For all $x\in U'$ and all $y\in W$, we have $x\in U_y$. Hence this also holds for all $x,y\in U:=U'\cap W$. 
\end{proof}

\section{Neumann's lemma and partial actions}\label{sec_parta}

The following result plays a key role in the proof.

\begin{thm}[B.H. Neumann's lemma \cite{Neu}]\label{t_neumann}
Let $G$ be a group and $E$ a $G$-set. Let $F$ be a finite subset of $E$ not meeting any finite $G$-orbit in $E$. Then there exists $g\in G$ such that $F\cap gF$ is empty.
\end{thm}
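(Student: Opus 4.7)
The plan is to argue by contradiction and reduce the statement to another classical lemma of B.H.\ Neumann, namely that a group cannot be written as a finite union of cosets of subgroups all of infinite index (equivalently: in any finite coset covering $G = \bigcup_{i=1}^n g_i H_i$, at least one $H_i$ has finite index in $G$).

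Suppose toward contradiction that $F \cap gF \neq \emptyset$ for every $g \in G$. For each pair $(x,y) \in F \times F$, consider the transport set
\[ T_{x,y} := \{g \in G : gx = y\}. \]
Whenever $T_{x,y}$ is nonempty, picking any $g_0 \in T_{x,y}$ shows that $T_{x,y} = g_0 G_x$, a left coset of the stabilizer $G_x$. The contradiction hypothesis then says exactly that every $g \in G$ lies in some $T_{x,y}$, so
\[ G = \bigcup_{(x,y) \in F \times F} T_{x,y}, \]
exhibiting $G$ as a union of at most $|F|^2$ left cosets of the stabilizer subgroups $(G_x)_{x \in F}$. The coset covering lemma then forces some $G_x$ to have finite index in $G$, which means the $G$-orbit of $x$ is finite---contradicting the hypothesis that $F$ meets no finite orbit.

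The hard part is thus entirely concentrated in the coset covering lemma itself, which is the step I expect to be the main obstacle. It is proved by induction on the number of cosets: assuming a hypothetical counterexample with all subgroups of infinite index, one picks a coset of a fixed $H_1$ not already used in the cover and observes that it must be covered by the remaining $n-1$ cosets; translating this back to $H_1$ via a right-multiplication yields a cover of $H_1$ by finitely many cosets of the intersections $H_1 \cap H_j$ for $j \geq 2$, and an inductive descent closes the argument. The conceptual move here is the reduction from the orbit version of Neumann's lemma to its coset-covering version; everything else is classical.
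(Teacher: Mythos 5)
Your proof is correct, and it follows the same route the paper intends: the paper gives no proof of its own but cites Neumann's ``Groups covered by finitely many cosets,'' i.e.\ exactly the coset-covering theorem you reduce to, and your reduction (covering $G$ by the transport sets $T_{x,y}$, each a coset of a stabilizer, then invoking the covering lemma and orbit--stabilizer) is the standard derivation. The covering lemma itself can safely be left to the cited reference, so your sketch of its inductive proof is not a gap.
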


Given a $G$-set $E$, a subset $X\subset E$ is said to be $G$-{\bf commensurated} if $X\smallsetminus g^{-1}X$ is finite for every $X$ (or equivalently $X\triangle gX=(X\smallsetminus gX)\sqcup g(X\smallsetminus g^{-1}X)$ is finite for all $g$). It is said to be $G$-{\bf transfixed} if there exists a $G$-invariant subset $Y$ such that $Y\triangle X$ is finite. We say that $X$ is $G$-{\bf transfixed above} if moreover $Y$ can be chosen to contain $X$. We have {\it (transfixed above)} $\Rightarrow$ {\it (transfixed)} $\Rightarrow$ {\it (commensurated)} and these are strict implications in general (for instance for the left-action of $\Z$ on itself, $\N$ is commensurated but not transfixed, and a singleton is not transfixed above but transfixed). We say that $X$ is {\bf finely $G$-transfixed above} if it is $G$-transfixed above, and moreover every finite $G$-orbit meeting $X$ is contained in $X$. Thus if $X$ is $G$-transfixed above, then there exists a finite subset $F$ such that $X\smallsetminus F$ is finely $G$-transfixed above.

 Let $G$ partially act on a set $X$ (so the universal globalization $\hat{X}$ will play the role of $E$). We say that $X$ is $G$-{\bf cofinite} if $X\smallsetminus D_g$ is finite for every $g\in G$. We say that $X$ is $G$-{\bf transfixed above} if $\hat{X}\smallsetminus X$ is finite. We say that $X$ is $G$-{\bf transfixed} if there exists a finite subset $F$ of $X$ such that $X\smallsetminus F$, with the restricted partial action, is $G$-transfixed above. We say that $X$ is {\bf finely $G$-transfixed above} if $\hat{X}\smallsetminus X$ is finite and meets no finite orbit.

It is straightforward to check that $X$ is $G$-cofinite if and only if $X$ is commensurated in $E$ (because the complement $X\smallsetminus D_g$ of the domain of $g\in G$ on $X$ is precisely $X\smallsetminus g^{-1}X$), and also the transfixing notions match. Therefore, Neumann's lemma (Theorem \ref{t_neumann}) can be translated as follows:

\begin{cor}
Let $G$ be a group and $X$ a partial $G$-set, that is finely transfixed above. Let $\hat{X}=\Compl(G,X)$ be the universal globalization. Then there exists $g\in G$ such that $\hat{X}=X\cup gX$.
\end{cor}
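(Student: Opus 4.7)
The plan is to apply Neumann's lemma (Theorem~\ref{t_neumann}) directly to the $G$-set $E=\hat{X}$ with the finite subset $F=\hat{X}\smallsetminus X$. The hypothesis that $X$ is finely $G$-transfixed above unpacks to exactly what is needed: $F$ is finite, and $F$ meets no finite $G$-orbit in $\hat{X}$. Neumann's lemma therefore produces some $g\in G$ with $F\cap gF=\emptyset$.

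It then remains to deduce the equality $\hat{X}=X\cup gX$ from $F\cap gF=\emptyset$. Since $\hat{X}$ is a genuine $G$-set, the disjoint decomposition $\hat{X}=X\sqcup F$ gives, after applying $g$, the further disjoint decomposition $\hat{X}=gX\sqcup gF$. Taking complements in $\hat{X}$ yields $\hat{X}\smallsetminus X=F$ and $\hat{X}\smallsetminus gX=gF$, hence
\[\hat{X}\smallsetminus (X\cup gX)=(\hat{X}\smallsetminus X)\cap(\hat{X}\smallsetminus gX)=F\cap gF=\emptyset,\]
which is the desired conclusion.

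The main obstacle is essentially nil once the dictionary between partial actions and their universal globalizations has been set up, as was done in the preceding paragraph: the corollary is nothing more than a translation of Neumann's lemma through that dictionary. The only point requiring any care is verifying that the term \emph{finely $G$-transfixed above} really corresponds, in the globalized picture, to \emph{finite complement meeting no finite orbit}; but this is immediate from the stated definition, so in practice there is nothing to prove beyond invoking Theorem~\ref{t_neumann} and computing the complement.
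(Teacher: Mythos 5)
Your proof is correct and matches the paper's intended argument exactly: the paper states the corollary with a \qed precisely because, after the dictionary identifying \emph{finely transfixed above} with ``$\hat{X}\smallsetminus X$ finite and meeting no finite orbit,'' it is the direct application of Theorem~\ref{t_neumann} to $E=\hat{X}$, $F=\hat{X}\smallsetminus X$, followed by the complement computation you carry out.
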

\begin{proof} Write $F=\hat{X}\smallsetminus X$. By assumption, $F$ is finite and intersects no finite orbit. By Theorem \ref{t_neumann} there exists $g\in G$ such that $F\cap gF=\emptyset$, which is exactly the required conclusion.
\end{proof}

\begin{defn}
A group $G$ has {\bf Property FW} if for every $G$-set $E$ and commensurated subset $X$, the subset $X$ is $G$-transfixed.
\end{defn}

Equivalently, $G$ has Property FW if for every partial $G$-set $X$, if $X$ is $G$-cofinite then $X$ is $G$-transfixed.

See \cite{CFW} for a detailed discussion of Property FW; the acronym ``FW" stands for ``Fixed point property on space with Walls". Examples of groups with Property FW are groups with Kazhdan's Property T, and others such as the group $\mathrm{SL}_2(\Z[\sqrt{k}])$, for $k\ge 2$ a positive non-square integer.

\section{The proof}

\begin{lem}\label{sep_criterion}
Let $X$ be a prevariety in which any two points belong to a common separated open subset. Then $X$ is separated.
\end{lem}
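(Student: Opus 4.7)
The plan is to verify directly that the diagonal morphism $\Delta\colon X\to X\times X$ is a closed immersion. The key tool I would use is that being a closed immersion is local on the target: if $\{N_i\}$ is an open cover of $X\times X$, then $\Delta$ is a closed immersion if and only if each restriction $\Delta^{-1}(N_i)\to N_i$ is.

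To build such a cover, I would use the hypothesis directly. Given $(x,y)\in X\times X$, pick a separated open subvariety $W\subset X$ with $\{x,y\}\subset W$; if $x=y$, any affine open neighborhood of $x$ already serves (affine schemes being automatically separated). Then $N=W\times W$ is open in $X\times X$ and contains $(x,y)$. Since $\Delta(z)=(z,z)$, one has $\Delta^{-1}(N)=W$, and the induced morphism $W\to W\times W$ is precisely the diagonal of $W$, which is a closed immersion by the assumed separatedness of $W$.

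Letting $(x,y)$ vary over $X\times X$, the sets $W\times W$ obtained this way form an open cover of $X\times X$ on which the local condition above holds. The locality of closed immersions then gives that $\Delta$ is itself a closed immersion, i.e.\ $X$ is separated. I do not expect any real obstacle: the only content beyond the hypothesis is the standard locality of closed immersions on the target, together with the elementary identity $\Delta^{-1}(W\times W)=W$ (in the purely topological variant of the statement one needs only the locality of closedness for subsets of a topological space).
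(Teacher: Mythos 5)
Your proof is correct and follows the same route as the paper: cover $X\times X$ by the products $W\times W$ with $W$ separated open, observe $\Delta^{-1}(W\times W)=W$ with induced map the diagonal of $W$, and conclude by the locality of closed immersions on the target. The paper's argument is exactly this, stated more tersely.
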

\begin{proof}
This is well-known. By assumption, the $U\times U$, for $U$ open separated subset, cover $X\times X$. Since being a closed immersion can be checked locally, we deduce that $X\to X\times X$ is a closed immersion.
\end{proof}

For a prevariety $X$, write $X=\bigsqcup_{i\ge 0}X_i$, where $X_i$ is the set of points that locally correspond to $i$-dimensional irreducible subvarieties. Thus $X_0$ is the union of all closed singletons. Write $X_{\ge i}=\bigcup_{j\ge i}X_j$.

\begin{proof}[Proof of Theorem \ref{sep_core}]

By reverse induction, for each $i\le d=\dim(X)$, we prove that there exists a finite subset $J$ of $G$ (of cardinal $\le 2^{d-i}$), and a dense open subset $Z$ of $X$ such that $\Big(\bigcup_{g\in J}gZ\Big)_{\ge i}$ is a $G$-invariant subset of the universal globalization $\hat{X}=\Compl(G,X)$.

This is clear for $i=d$, as $X_{\ge i}=X_i$ is an invariant singleton. Now, for $0\le i< d$ suppose that the claim is proved for $i+1$: there exists $J\subset G$ and a dense open subset $Z\subset X$, such that, denoting $Y=\bigcup_{g\in J}gZ$: the subset $Y_{\ge i+1}$ is $G$-invariant, with $|J|\le 2^{d-i-1}$.

Let $K$ be the complement of $Y$ in $\hat{X}$.
We claim that $hK_i\cap X_i$ is finite for every $h\in G$.
Indeed, $(hK\cap X)_{\ge i+1}$ being empty, $hK\cap X$ has dimension $\le i$ and is a closed subset of $X$, and hence $(hK\cap X)_i$ is indeed finite.

We next claim that $Y_i(=\bigcup_{g\in J}gZ_i)$ is $G$-commensurated. Indeed, for $g\in J$, we have $hK_i\cap gZ_i\subset hK_i\cap gX_i=g(g^{-1}hK_i\cap X)$ finite by the previous claim, and since it holds for each $g\in J$, we deduce that $hK_i\cap Y_i$ is finite, so that $Y_i$ is $G$-commensurated. 
 
Since $G$ has Property FW, $Y_i$ is $G$-transfixed. Hence there exists a finite subset $L$ of $Y_i$ such that $Y_i\smallsetminus L$ is finely transfixed above (as defined in \S\ref{sec_parta}). Thus, no finite orbit in $\hat{X}_i$ meets both $L$ and $Y_i$. 
Let $\dot{L}$ be the closed subset of $\hat{X}$ corresponding to $L$. Define $Z'=Z\smallsetminus\bigcup_{g\in J}g^{-1}\dot{L}$, and $Y'=\bigcup_{g\in J}gZ'\subset Y$. Then
\[Y\smallsetminus Y'=\bigcup_{h\in J}hZ\smallsetminus\left(\bigcup_{h\in J}hZ\smallsetminus\Big(\bigcup_{g\in J}hg^{-1}\dot{L}\Big)\right)\subset\bigcup_{h,g\in J}hg^{-1}\dot{L},\]
%\[\subset\bigcup_{h,g\in J'}hg^{-1}\dot{L},\]
so $Y_i\smallsetminus Y'_i\subset GL$. The inclusions $Y'_i\subset Y_i\supset Y_i\smallsetminus L\subset \bigcup_{g\in G}g(Y_i\smallsetminus L)$ are all cofinite, and hence the inclusion $Y'_i\subset \bigcup_{g\in G}g(Y_i\smallsetminus L)$ is cofinite too, so $Y'_i$ is transfixed above. It is actually finely transfixed above, indeed, otherwise we find a finite orbit in $\hat{X}_i$ meeting $Y_i\smallsetminus Y'_i$, and hence meeting both $L$ and $Y_i$, a contradiction.

 By the corollary of Neumann's lemma, there exists $h\in G$ such that $Y'_i\cup hY'_i=\hat{Y'_i}$. That is, $Y'_i\cup hY'_i$ is $G$-invariant. For $j>i$, $Y'_j\cup hY'_j=Y'_j$ is $G$-invariant. So $(Y'\cup hY')_{\ge i}$ is $G$-invariant. Since $Y'\cup hY'=\bigcup_{g\in J'}gZ'$ with $J'=J\cup hJ$, the induction step is proved (with $|J'|\le 2|J|\le 2^{d-i}$).

Therefore, by the $i=0$ case of the claim, there exists a dense open subset $Z\subset X$ and a finite subset $J\subset G$ such that $Y=\bigcup_{g\in J}gZ$ is $G$-invariant. So $Y$ is noetherian. Hence we can apply Lemma \ref{noeth_core}: $Y$ has dense open subsets $U\subset Y'$ with $Y'$
$G$-invariant, $U$ separated, and every pair in $Y'$ can be $G$-translated into $U$; by Lemma \ref{sep_criterion} it follows that $Y'$ is separated.
\end{proof}

{\bf Acknowledgements.} I thank Serge Cantat for encouragement, valuable discussions and remarks on a preliminary version, notably the reference to \cite{CZ}. I owe to Michel Brion the argument using equivariant resolution of singularities in Remark \ref{firstrem}.

% ----------------------------------------------------------------
\end{document}